\newtheorem{theorem}{Theorem}[section]
\newtheorem{lemma}[theorem]{Lemma}
\newtheorem{corollary}[theorem]{Corollary}
\newtheorem{ex}[theorem]{Example}
\newtheorem{prop}[theorem]{Proposition}
\newcommand{\C}{\mathbb C}
\newcommand{\N}{\mathbb N}
\DeclareMathOperator{\rk}{rk}
\begin{document}

\title{On non-formality of homogeneous spaces}

\author{Zofia St\c{e}pie\'{n}}
\address{
School of Mathematics, West Pomeranian University of Technology\\
al. Piast\'{o}w 48/49, 70-310 Szczecin, Poland}
\email{stepien@zut.edu.pl}
\subjclass[2010]{55P62; 57T15; 17B22.}

\keywords{Non-formal manifold; Homogeneous space; Coxeter transformation.}
\begin{abstract}
One of the interesting and important rational homotopy properties of a topological space $X$ is that of {\em formality}.
In this paper we prove the non-formality property of some family homogeneous spaces.
\end{abstract}

\maketitle

\section{Introduction}

The path-connected topological space $X$ is called {\em formal} if there is a chain of morphisms  of the form 
\[(A_{PL}(X),d)\rightarrow \ldots \leftarrow\ldots\rightarrow\ldots\leftarrow (H(A_{PL}(X),d),0)\]
inducing isomorphisms in cohomology. Here $A_{PL}(X)$ denotes the algebra of polynomial differential forms on $X.$
Homogeneous spaces constitute a very well-studied and important class of topological spaces. 
There are some classes of compact homogeneous spaces which are
well-known to be formal, for example symmetric spaces (see \cite{GHV}), $k-$symmetric spaces (see \cite{KT}, \cite{S}),
 compact  homogeneous space $G/H$ with $\rk G = \rk H$ (\cite{B}).
In \cite{GHV} a description of formal homogeneous spaces (referred as Cartan pairs) was given. 
Examples of formal and non-formal spaces can be found, for instance, in \cite{K}, \cite{GHV}, \cite{O}.
 In \cite{A} the author gives several construction principles and 
characterizations for non-formal homogeneous spaces. In \cite{GHV} the  authors  show  that  $SU(6)/SU(3)\times SU(3)$ is non-formal  space.
 In this paper we prove a generalization of this in Section 3. Our method is suited for a variety of applications 
and can produce a lot of new examples.

\section{Basic facts}
In the sequel, we will use the following notation. 
Suppose that $G$ is a connected compact Lie group, $H$ is its subgroup, $S$ is the maximal torus
of the subgroup $H,$ and $T$ is the maximal torus of the group $G.$
We denote Lie algebras corresponding to Lie groups by the corresponding Gothic letters. The symbol
${\frak g}(\C)$ denotes the complexification of a real Lie algebra ${\frak g}.$

The question about formality of homogeneous spaces is settled (in principle) by analysis of their Cartan algebras. 
Let us give the precise definition:
Let $\rk G=l$ and let $\rk H=m$. By the Hopf theorem (see \cite{B}) $H^{*}(G)=\Lambda (z_{1},\ldots , z_{l}),$ the exterior algebra generated by 
 universal transgressive elements $z_{1},$ $\ldots,$ $z_{l}.$ Let $y_{1},$ $\ldots,$ $y_{l},$ be generators of $H^{*}({\bf B}G)$ 
corresponding to $z_{1},$ $\ldots,$ $z_{l}$ by transgression, where ${\bf B}G$ is the classifying space for $G.$ 
The {\em Cartan algebra} of the
homogeneous space $G/H$ is the algebra $C =H^{*}({\bf B } H) \otimes H^{*}(G)$ endowed
with the following differential $d:$

\begin{align*}
&d(1 \otimes z_{i}) =  H^{*}({\bf B}i)(y_{i}) \otimes 1  \ (1 \leq i \leq l),\\
&d(b \otimes 1) = 0 \ \text{for} \ b\in H^{*}({\bf B}H),\\
\end{align*}

where $i: H\hookrightarrow G$ denotes the inclusion.

Denote by $I$ the ideal generated by the $H^{*}({\bf B}i)(y_{i})$ (for $1\leq i \leq l$) in  $H^{*}({\bf B}H).$ 
Let $r$ be the cardinality of a minimal system of homogeneous generators of $I.$ Then $r\geq m$ 
(see \cite{O}, \S 8, $n^{\circ}$ 6 and  Proposition 11.8). 
The number $r-m$ is called the {\em deficiency} of $G/H$ and it is denoted by $df(G/H).$

A basic tool for our study of the formality of homogeneous spaces is the following theorem.
(See \cite{O} Theorem 12.2 and Corollary of 11.10.)

\begin{theorem}
 
 \begin{enumerate} In the above notation, the following conditions hold:
	\item $G/H$ is formal if and only if $df(G/H)=0.$
	\item  $df(G/H)=df(G/S).$
 \end{enumerate}
\end{theorem}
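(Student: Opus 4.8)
The plan is to reduce both statements to commutative algebra about the ideal $I\subseteq H^*(\mathbf{B}H)$. The starting observation for (1) is that the Cartan algebra $(C,d)$ is not merely a device for computing $H^*(G/H;\R)$ but is a Sullivan model of $G/H$ (H.~Cartan's theorem, recast in Sullivan's language; this uses that $H$ is compact connected, so that $H^*(\mathbf{B}H;\R)$ is polynomial). Writing $\Lambda W=H^*(\mathbf{B}H;\R)$ with $W$ in even degrees and $V=\langle z_1,\dots,z_l\rangle$ in odd degrees, $(C,d)$ is a \emph{pure} Sullivan algebra: $dW=0$ and $dV\subseteq\Lambda W$. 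Hence $G/H$ is formal if and only if the differential graded algebra $(C,d)$ is formal. Disregarding the internal grading, $(C,d)$ is exactly the Koszul complex of the sequence $\rho_1,\dots,\rho_l$ (with $\rho_i=H^*(\mathbf{B}i)(y_i)$) over the regular ring $\Lambda W$, so $H^*(C,d)$ is the Koszul homology of $\rho_1,\dots,\rho_l$ and its zeroth piece is $\Lambda W/I$. Because $G/H$ is a closed manifold, $H^*(G/H;\R)$ is finite dimensional; since the image of the subalgebra $\Lambda W$ in $H^*(C,d)$ is $\Lambda W/I$ (using $d(C)\cap\Lambda W=I$) and this is injective, $\Lambda W/I$ is finite dimensional, so $I$ is $\mathfrak{m}$-primary and $\operatorname{ht}I=\dim\Lambda W=m$. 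Consequently $r=\mu(I)$, the minimal number of generators of $I$, satisfies $r\ge m$, with equality precisely when $I$ is a complete intersection ideal, and $df(G/H)=\mu(I)-m$.

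For (1), the implication $df(G/H)=0\Rightarrow G/H$ formal is the constructive half. If $I=(\sigma_1,\dots,\sigma_m)$ with the $\sigma_j$ a regular sequence — a minimal generating set of an $\mathfrak{m}$-primary ideal in the Cohen--Macaulay ring $\Lambda W$ is automatically a regular sequence — then I would change basis in the free graded algebra $\Lambda W\otimes\Lambda V$, degree by degree in $V$, so that the new odd generators are $z'_1,\dots,z'_m$ with $dz'_j=\sigma_j$ together with $z''_{m+1},\dots,z''_l$ with $dz''_j=0$ (first arrange the differentials of $m$ of the generators to be the $\sigma_j$, then subtract suitable $\Lambda W$-multiples of the $z'_j$ from the remaining ones). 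Then $(C,d)$ is the tensor product of the Koszul complex on the regular sequence $\sigma_1,\dots,\sigma_m$, which is quasi-isomorphic to $(\Lambda W/I,0)$, with $(\Lambda(z''_{m+1},\dots,z''_l),0)$; hence $(C,d)\simeq(H^*(G/H;\R),0)$ and $G/H$ is formal.

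The hard part — and the part actually used in this paper — is the converse: $df(G/H)>0\Rightarrow G/H$ non-formal. Here one cannot simply exhibit the obstruction in a single case (as is done for $SU(6)/SU(3)\times SU(3)$ in the references); one must produce it in general. My approach would be to build the bigraded model of the pure algebra $(C,d)$ (Halperin--Stasheff obstruction theory) and to show that the obstruction to formality, living in a Harrison/André--Quillen-type cohomology group, is non-zero precisely when $\rho_1,\dots,\rho_l$ is not a complete intersection, i.e.\ when the higher Koszul homology $H_{>0}(\rho_1,\dots,\rho_l;\Lambda W)$ is non-trivial. Concretely, a syzygy among the $\rho_i$ that is not a Koszul syzygy should be promoted to a non-vanishing Massey-type product in $(C,d)$, which is a rational-homotopy invariant obstructing formality. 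Carrying this out is the technical core of (1), and is where I expect the main difficulty.

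Statement (2) needs no appeal to (1); it is commutative algebra. From $S\hookrightarrow H\hookrightarrow G$ one gets $H^*(\mathbf{B}(S\hookrightarrow G))=\iota^*\circ H^*(\mathbf{B}(H\hookrightarrow G))$ with $\iota^*\colon H^*(\mathbf{B}H;\R)\to H^*(\mathbf{B}S;\R)$, so the ideal attached to $G/S$ is the extension $I'=I\cdot H^*(\mathbf{B}S;\R)$. By Borel's theorem $H^*(\mathbf{B}H;\R)=H^*(\mathbf{B}S;\R)^{W_H}$, and $H^*(\mathbf{B}S;\R)$ is a free, hence faithfully flat, module over it (Chevalley's theorem for the finite reflection group $W_H$; or ``miracle flatness'', the extension being module-finite between regular rings of equal Krull dimension). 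Flatness gives $I'\cong I\otimes_{H^*(\mathbf{B}H;\R)}H^*(\mathbf{B}S;\R)$, and since both algebras are connected graded with $\mathfrak{m}_H\cdot H^*(\mathbf{B}S;\R)\subseteq\mathfrak{m}_S$, reduction modulo $\mathfrak{m}_S$ yields $I'/\mathfrak{m}_S I'\cong I/\mathfrak{m}_H I$; by graded Nakayama $\mu(I')=\mu(I)$, i.e.\ $r(G/S)=r(G/H)$. Finally $\rk S=\dim S=\rk H=m$, so $df(G/S)=r(G/S)-\rk S=r(G/H)-m=df(G/H)$.
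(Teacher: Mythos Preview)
The paper does not prove this theorem at all: it is quoted from the literature, with the reference ``See \cite{O} Theorem~12.2 and Corollary of~11.10'' standing in for a proof. So there is no paper argument to compare against; your proposal is an independent sketch of why the cited result is true.

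As such a sketch, it is largely sound. Your argument for~(2) via Chevalley's theorem (so that $H^*(\mathbf{B}S)$ is free, hence faithfully flat, over $H^*(\mathbf{B}H)=H^*(\mathbf{B}S)^{W_H}$) together with graded Nakayama to conclude $\mu(I')=\mu(I)$ is correct and self-contained. For~(1), the identification of the Cartan algebra with a pure Sullivan model and of formality with the regular-sequence condition is the right framework, and your forward implication ($df=0\Rightarrow$ formal) is fine. The converse, however, you explicitly leave open: you describe the intended mechanism (a non-Koszul syzygy forces a non-trivial Massey-type obstruction in the bigraded model) but do not carry it out. That is the genuine content of the theorem in the direction actually used in this paper, so as it stands your proposal is an outline rather than a proof of~(1). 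If you want to close the gap, the cleanest formulation is the known characterization of formal pure Sullivan algebras: $(\Lambda W\otimes\Lambda V,d)$ with $dV\subseteq\Lambda W$ is formal iff after a change of basis in $V$ it splits as a Koszul complex on a regular sequence in $\Lambda W$ tensored with an exterior algebra with zero differential, which is exactly $\mu(I)=\dim W$; proving the ``only if'' direction requires exhibiting the obstruction you allude to.
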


By Theorem 2.1  we can work with $G/S$  instead of $G/H.$
We denote by $W_{G}$ the Weyl group of $\frak{g}$ relative to $\frak{t},$ and  by $S_{W_{G}}$ the ring of $W_{G}$-invariant polynomials on $\frak{t}$ with
real coefficients. We can identify $H^{*}({\bf B} G)$ with $S_{W_{G}}$ (see \cite{B}). Then $H^{*}({\bf B} G)$ is generated by $f_{k_{1}}$ $\ldots$ $f_{k_{l}},$ $j=1,\ldots,l,$ where $f_{k_{j}}$ is a polynomial of  degree $k_{j}$.
Moreover  $H^{*}({\bf B}i):H^{*}({\bf B} H)\rightarrow H^{*}({\bf B} S)$ is the map which  assigns to each polynomial  its restriction to $\frak{s}.$

 A {\em Coxeter transformation} is defined 
as the product of all the reflections in the root system of a compact Lie group. 
Neither the choice of simple roots nor the ordering of reflections in the product affects its conjugacy class. We will denote the Coxeter transformation by $c.$ 
Its order in $W_{G}$ is denoted by $h$ and is called the {\em Coxeter number} of $G$. Let $G$ be simple. 
The eigenvalues of $c$ have the form 
$e^{2\pi i m_{j}/h}$, $j=1,\ldots,l$, where $m_{j}$ are integers.  The numbers $m_{1},$ $\ldots,$ $m_{l},$ are called the
exponents of the Weyl group.

Recall that ${\frak g}$ admits an invariant negative definite symmetric bilinear form which one can extend to Hermitian positive definite form on ${\frak g}(\C).$ Moreover there is the orthonormal basis  $X_{m_{_{1}}},...,X_{m_{l}}$  
of ${\frak t}(\mathbb C)$ which consist of eigenvectors of $c$. Here  $X_{m_{i}}$ is the eigenvector corresponding to the eigenvalue $e^{2\pi i m_{j}/h}.$ 
Recall that and $X_{1}=X_{m_{1}}$ and $X_{m_{l}}=X_{h-1}.$ 
Note that polynomials on $\frak{t}$ extend uniquely to polynomials on $\frak{t}(\C)$ (denoted by the same letters).

The following Proposition can be found in \cite{Bou}. (See \cite{Bou}, Ch.V, \S 6, $n^{\circ}$ 2.)

\begin{prop}
Let $m_{1},\ldots, m_{l}$ be the exponents of a simple connected compact Lie group $G.$ Then any free system of 
homogeneous generators of the ring of $W_{G}$-invariant polynomials on $\frak{t}$ has the degrees $m_{1}+1,$ $\ldots,$ $m_{l}+1.$
\end{prop}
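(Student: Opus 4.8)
The plan is to derive the statement from the Chevalley--Shephard--Todd structure theory for reflection groups together with the classical analysis of the Coxeter element. First I would recall that, since $W_G$ acts on $\frak{t}$ as a finite group generated by reflections, Chevalley's theorem guarantees that $S_{W_G}$ is a polynomial algebra $\R[f_{k_1},\ldots,f_{k_l}]$ on $l$ algebraically independent homogeneous generators, and that the multiset of degrees $\{k_1,\ldots,k_l\}$ depends only on $W_G$ and not on the chosen generators. The latter point I would prove by comparing Hilbert series: the coinvariant algebra $S(\frak{t}^{*})/\bigl(S(\frak{t}^{*})^{W_G}_{+}\bigr)$ (the quotient by the ideal generated by the positive-degree invariants) has Hilbert series $\prod_{j}(1+t+\cdots+t^{k_j-1})$ for \emph{any} choice of generators, so the degrees are forced. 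It therefore suffices to show that this degree multiset equals $\{m_1+1,\ldots,m_l+1\}$.

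Next I would record the two numerical identities satisfied by the degrees $k_j$: the product $\prod_{j}k_j=|W_G|$ (let $t\to 1$ in the Hilbert series above and use that the coinvariant algebra has dimension $|W_G|$), and $\sum_{j}(k_j-1)=N$, the number of positive roots (compare the degrees of the two sides of the Jacobian identity $\det\bigl(\partial f_{k_i}/\partial x_j\bigr)=\mathrm{const}\cdot\prod_{\alpha>0}\alpha$, or read off the top degree of the coinvariant algebra). On the Coxeter side, I would use that $c$ has no nonzero fixed vector in $\frak{t}(\C)$, so each $m_j\in\{1,\ldots,h-1\}$, that the eigenvalues come in complex conjugate pairs $m_j\leftrightarrow h-m_j$, and that $\sum_j m_j=N$; the extreme exponents $m=1$ and $m=h-1$ appear because $c$ rotates a distinguished plane by $2\pi/h$. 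The plane and these facts become explicit once one writes $c=\sigma_1\sigma_2$ with $\sigma_1,\sigma_2$ involutions, each a product of mutually commuting simple reflections --- possible because the Coxeter--Dynkin diagram of a simple $G$ is a tree and hence $2$-colorable --- and invokes a Perron--Frobenius argument for $\sigma_1\sigma_2$.

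The heart of the argument, and the step I expect to be the main obstacle, is to upgrade the matching of the sums and products of $\{k_j-1\}$ and $\{m_j\}$ to an equality of the full multisets; the two numerical coincidences by themselves do not force this. I would close this gap in one of two ways. The cleanest is Springer's theory of regular elements: the Coxeter element is a regular element of order $h$ (its eigenvector for $e^{2\pi i/h}$ in the Coxeter plane lies on no reflecting hyperplane), and Springer's theorem identifies the eigenvalues on $\frak{t}(\C)$ of a regular element of order $d$ as $e^{\pm 2\pi i(k_j-1)/d}$, $j=1,\ldots,l$; taking $d=h$ yields $m_j=k_j-1$ after reordering, i.e. the degrees are $m_j+1$. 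The alternative, closer to Coxeter's original route, combines the explicit rotation on the Coxeter plane, the conjugation symmetry of the exponents, the identities $\prod_j k_j=|W_G|$ and $\sum_j(k_j-1)=N$, and the extra input $h=k_l$ (the Coxeter number equals the largest degree) to force the term-by-term identification, completing the few residual cases via the classification of simple Lie algebras. Either way, the Proposition is reduced to standard structure theory of finite reflection groups, which is precisely the treatment in \cite{Bou}.
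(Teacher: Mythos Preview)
The paper does not give its own proof of this Proposition: it simply cites Bourbaki, Ch.~V, \S6, $n^{\circ}$~2, and then extracts from that proof the auxiliary fact recorded as Lemma~2.3. So there is no in-paper argument to compare against, only the reference.

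Your sketch is mathematically sound, and either the Springer regular-element route or the Coxeter numerical route would establish the Proposition. It is worth noting, however, that Bourbaki's actual argument---the one the paper is pointing to---follows neither of the two paths you outline. Bourbaki works directly in the orthonormal eigenbasis $X_{m_1},\ldots,X_{m_l}$ of the Coxeter element on $\frak{t}(\C)$: since each $f_{k_j}$ is $c$-invariant and $c$ scales $X_{m_i}$ by $e^{2\pi i m_i/h}$, every monomial occurring in $f_{k_j}$ must have exponent-weight $\equiv 0 \pmod h$; a Jacobian computation then shows that the coefficient of $X_{1}^{\,k_j-1}X_{h-(k_j-1)}$ in $f_{k_j}$ is nonzero, forcing $k_j-1$ to lie among the $m_i$ and pinning down the bijection. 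That nonvanishing coefficient is exactly Lemma~2.3, and it is this Lemma---not the bare statement of the Proposition---that the paper uses repeatedly in Section~3. Your proof strategies, while correct, would not automatically deliver Lemma~2.3 as a byproduct, so for the purposes of this paper the Bourbaki route is the more economical reference.
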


In the proof of the above Proposition the following fact can be found:
\begin{lemma}
Let $G$ be a simple connected compact Lie group. 
 Then the coefficient of $X_{1}^{m_{i}}X_{h-m_{i}}$ in $f_{m_{i}+1}$ is nonzero.
\end{lemma}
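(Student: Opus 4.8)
The plan is to exploit the action of the Coxeter element $c$ on the polynomial ring and the known structure of its eigenvalues and eigenvectors. Since $f_{m_i+1}$ is a $W_G$-invariant polynomial on $\frak{t}(\C)$, it is in particular invariant under $c$. Writing $f = f_{m_i+1}$ in the eigenbasis $X_{m_1},\ldots,X_{m_l}$ of $\frak{t}(\C)$, a monomial $X_{m_{j_1}}\cdots X_{m_{j_k}}$ (with $k = m_i+1$) is multiplied by $c$ by the scalar $e^{2\pi i (m_{j_1}+\cdots+m_{j_k})/h}$. Invariance of $f$ under $c$ forces every monomial actually occurring in $f$ to satisfy $m_{j_1}+\cdots+m_{j_k} \equiv 0 \pmod h$. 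First I would record this congruence; it is the basic combinatorial constraint.

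Next I would identify which low-degree monomials built only from $X_1$ and $X_{h-m_i}=X_{h-(h-m_i)}$... more precisely from $X_1=X_{m_1}$ and the eigenvector $X_{h-m_i}$ (which exists because $h-m_i$ is again an exponent, by the symmetry $m_j \mapsto h-m_j$ of the exponents of a simple group). The monomial $X_1^{m_i}X_{h-m_i}$ has total eigenvalue exponent $m_i\cdot 1 + (h-m_i) = h \equiv 0 \pmod h$, so it is $c$-invariant and hence is \emph{permitted} to appear. The content of the lemma is that its coefficient is not merely permitted but genuinely nonzero. To see this I would argue by contradiction: suppose the coefficient of $X_1^{m_i}X_{h-m_i}$ in $f$ vanishes for some choice of free homogeneous generators. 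Using Proposition 2.3, one knows the degrees $m_1+1,\ldots,m_l+1$ of any free generating system are fixed, and the generators are unique up to a (triangular, by degree) polynomial change of coordinates; so vanishing of this coefficient for one choice would force a relation among the other generators, contradicting algebraic independence. The cleanest route is to use the classical fact (from the same passage in Bourbaki) that the restriction of the generators to the line $\C X_1$ together with the eigenvalue bookkeeping pins down these leading terms: one shows that the Jacobian-type nonvanishing criterion for a free generating system (the product of the $X_1$-degrees of the generators, appropriately interpreted, equals $|W_G|$ up to the known constant) fails precisely when one of these coefficients is zero.

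The main obstacle I anticipate is making the last step precise without simply quoting Bourbaki wholesale: one must show that the specific coefficient in front of $X_1^{m_i}X_{h-m_i}$ — as opposed to the coefficient of some other $c$-invariant monomial of the same degree, such as a product involving three or more eigenvectors — is the one that is controlled. Here I would use a grading/filtration argument: filter the polynomial ring by the number of tensor factors different from $X_1$, and observe that modulo the terms with at least two such factors, $f_{m_i+1}$ restricted to the subspace spanned by $X_1$ and $X_{h-m_i}$ must be a nonzero multiple of $X_1^{m_i}X_{h-m_i}$, since a nonzero $c$-invariant homogeneous polynomial of degree $m_i+1$ in these two variables with the congruence constraint $a\cdot 1 + b\cdot(h-m_i)\equiv 0$ and $a+b=m_i+1$ has, among the relevant filtration level $b=1$, only this monomial available. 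The nonvanishing at that filtration level then follows from the fact that $f_{m_i+1}$ itself is not a polynomial in $f_{m_1+1},\ldots,f_{m_{i-1}+1}$ of lower index together with products — i.e. from algebraic independence — which is exactly the hypothesis packaged in Proposition 2.3. I would carry out the eigenvalue congruence bookkeeping first, then the two-variable reduction, and leave the algebraic-independence argument for last as the crux.
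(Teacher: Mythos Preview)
The paper does not actually prove this lemma: it simply records that the statement ``can be found'' in Bourbaki's proof of Proposition~2.2 (Ch.~V, \S6, $n^{\circ}2$). So there is no in-paper argument to compare against beyond the Bourbaki reference itself.

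Your eigenvalue bookkeeping is correct and is exactly the first half of Bourbaki's argument: $c$-invariance forces every monomial in $f_{m_i+1}$ to have exponent sum $\equiv 0 \pmod h$, and among monomials with at most one factor $\neq X_1$ the only survivor is $X_1^{m_i}X_{h-m_i}$. You also mention, almost in passing, the decisive ingredient: a ``Jacobian-type nonvanishing criterion''. That is in fact the whole point, and you should promote it from an aside to the main argument. Concretely: $X_1$ is a \emph{regular} element (no root vanishes on it; this is Kostant's observation, proved in the same Bourbaki passage), hence the Jacobian $J=\det\bigl(\partial f_{m_i+1}/\partial X_{m_j}\bigr)$, being a nonzero multiple of the product of the roots, does not vanish at $X_1$. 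Your eigenvalue computation shows that $(\partial f_{m_i+1}/\partial X_{m_j})(X_1)=0$ unless $m_j=h-m_i$, so the matrix at $X_1$ is, up to a permutation of columns, diagonal with entries equal (up to harmless integer factors) to the coefficients of $X_1^{m_i}X_{h-m_i}$. Nonvanishing of $J(X_1)$ then forces each such coefficient to be nonzero.

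By contrast, your filtration-plus-algebraic-independence route has a genuine gap. The assertion ``vanishing of this coefficient for one choice would force a relation among the other generators'' is not justified: algebraic independence of $f_{m_1+1},\dots,f_{m_l+1}$ is equivalent to the Jacobian being nonzero \emph{generically}, which says nothing about its value at the specific point $X_1$. Likewise, ``nonvanishing at that filtration level follows from algebraic independence'' begs the question: there is no a priori reason a given $f_{m_i+1}$ could not have all its monomials involving at least two factors $\neq X_1$ while remaining algebraically independent of the others. The content you need is precisely the regularity of $X_1$, which is an extra (though standard) fact, not a consequence of algebraic independence. Once you insert that, your sketch becomes Bourbaki's proof.
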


\begin{corollary}
Let $G$ be simple connected compact Lie group. Then
\begin{enumerate}
	\item $f_{m_{j}+1}(X_{1})= 0$ for $1\leq j<l-1.$
	\item $f_{m_{l}+1}(X_{1})\neq 0.$
\end{enumerate}
\end{corollary}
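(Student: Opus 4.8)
The plan is to identify $f_{m_j+1}(X_1)$ with a single coefficient of $f_{m_j+1}$, and then to control that coefficient using $W_G$-invariance, specifically invariance under the Coxeter element $c$ itself. I would expand every $W_G$-invariant polynomial in the linear coordinates $X_{m_1},\dots,X_{m_l}$ on ${\frak t}(\C)$ dual to the eigenbasis $X_{m_1},\dots,X_{m_l}$ of $c$ (these are the generators featuring in Lemma~2.3, so that $X_{m_i}(X_{m_k})=\delta_{ik}$). Since $X_1=X_{m_1}$, substituting the vector $X_1$ into a monomial $\prod_i X_{m_i}^{\alpha_i}$ yields $1$ when $\alpha_1$ equals the total degree and every other $\alpha_i$ vanishes, and $0$ otherwise; as $f_{m_j+1}$ is homogeneous of degree $m_j+1$ by Proposition~2.2, this gives $f_{m_j+1}(X_1)=\bigl[\text{the coefficient of }X_1^{m_j+1}\text{ in }f_{m_j+1}\bigr]$.

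Next I would use that $c\in W_G$, so each $f_{m_j+1}$ is $c$-invariant. With $\zeta=e^{2\pi i/h}$, the coordinate $X_{m_i}$ is a $c$-eigenvector of eigenvalue $\zeta^{\pm m_i}$, hence the monomial $\prod_i X_{m_i}^{\alpha_i}$ is a $c$-eigenvector of eigenvalue $\zeta^{\pm\sum_i m_i\alpha_i}$; $c$-invariance therefore forces $\sum_i m_i\alpha_i\equiv 0\pmod h$ for every monomial occurring in $f_{m_j+1}$. For $X_1^{m_j+1}$ this reads $m_j+1\equiv 0\pmod h$. Since $m_1=1$ and the largest exponent $h-1=m_l$ occurs only once (as in every simple $G$), we have $m_j<h-1$ for $j<l$, so $2\le m_j+1\le h-1$ whenever $1\le j\le l-1$; the congruence then fails, the coefficient vanishes, and the identity above gives $f_{m_j+1}(X_1)=0$ for all $1\le j\le l-1$. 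This proves $(1)$.

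For $(2)$ one has $m_l+1=h\equiv 0\pmod h$, so the argument just given is silent and Lemma~2.3 must supply the conclusion. Here $h-m_l=1$ and, the exponent $1$ occurring only once, $X_{h-m_l}=X_1$; hence, taking $i=l$ in Lemma~2.3, the monomial $X_1^{m_i}X_{h-m_i}$ appearing there is precisely $X_1^{h-1}X_1=X_1^{h}$, and Lemma~2.3 states that its coefficient in $f_{m_l+1}$ is nonzero. By the identity of the first paragraph this coefficient equals $f_{m_l+1}(X_1)$, which proves $(2)$.

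The computations involved are minimal. The points requiring care are the bookkeeping of the first two paragraphs — the identity expressing $f_{m_j+1}(X_1)$ as a coefficient, and the $c$-eigenvalue of a monomial — together with the observation, decisive for $(2)$, that for the top-degree invariant the congruence obstruction disappears, so $(2)$ is invisible to the invariance argument and must instead be extracted from Lemma~2.3 via the index identification $X_{h-m_l}=X_1$.
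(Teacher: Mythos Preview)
Your proof is correct and rests on the same two ingredients as the paper's: $c$-invariance of the $f_{m_j+1}$ for part~(1), and Lemma~2.3 for part~(2). The paper, however, dispatches~(1) in one line without any coefficient extraction: since $c(X_1)=\zeta X_1$ with $\zeta=e^{2\pi i/h}$, invariance and homogeneity give directly
\[f_{m_j+1}(X_1)=f_{m_j+1}(c(X_1))=\zeta^{\,m_j+1}f_{m_j+1}(X_1),\]
and $\zeta^{\,m_j+1}\neq 1$ for $j<l$ forces the vanishing. Your detour through the monomial expansion is therefore unnecessary for~(1), though it does make the invocation of Lemma~2.3 in~(2) more explicit --- the paper simply writes ``Apply Lemma~2.3'' and leaves to the reader the identification of $f_{m_l+1}(X_1)$ with the coefficient of $X_1^{h}$, which you spell out carefully via $X_{h-m_l}=X_1$.
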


\begin{proof}
(1) We have 
\[f_{m_{j}+1}(X_{1})=f_{m_{j}+1}(c(X_{1}))=e^{2\pi i(m_{j}+1)/h}f_{m_{j}+1}(X_{1}).\]

If $1\leq j<l-1,$ then $e^{2\pi i(m_{j}+1)/h}\neq 0.$ Hence $f_{m_{j}+1}(X_{1})=0$ for $1\leq j<l-1.$

(2) Apply Lemma 2.3.

\end{proof}

\section{Results}
Now, we  restrict our attention to $G=SU(n).$ According to the  classical Cartan classification, the simple Lie group $SU(n)$ has the type $A_{n-1}.$ 
Traditionally, the canonical coordinates  on ${\frak t}(\C)$ 
are chosen to be $x_{1},$ $\ldots,$ $x_{n}$ satisfying the condition 
$x_{1}+x_{2}+\ldots+x_{n}=0.$ It is well known that the Weyl group $W_{SU(n)}$ is the group $S_{n}$ of all permutations of the 
coordinates $x_{j}.$ Moreover $S_{W_{SU(n)}}$ is generated by $P_{2}$ $\ldots$ $P_{n},$  where $P_{j}$ are the elementary symmetric polynomials of  degree $j$. We will denote by $\left\langle \left\langle P_{2},\ldots,P_{k}\right\rangle\right\rangle$ the subalgebra 
of $S_{W_{SU(n)}}$ generated by $P_{2}$ $\ldots$ $P_{k}.$
It is easy to see that the automorphism which maps $x_{1}$ to $x_{2},$ $x_{2}$ to $x_{3},$
	$\ldots,$ $x_{n}$ to $x_{1}$ is  a Coxeter transformation. 
	According to our notation  in this case $h=n.$
	Let $\varepsilon =e^{2\pi i/n}$ be a primitive root of unity and 
	$X_{k}=1/\sqrt{n}\left(\left(\varepsilon^{k}\right)^{n-1},\left(\varepsilon^{k}\right)^{n-2},\ldots,\varepsilon^{k},1\right).$ It is easy to  check that 
 the vectors  $X_{1},$ $\ldots,$ $X_{n-1}$ form an orthonormal basis of the eigenvectors of $c$ for ${\frak t}(\C).$ 
We will denote this basis by ${\mathcal B}_{A_{n-1}}.$ We will denote by $p_{j}$  the restriction of $P_{j}$ to ${\frak s}.$

\begin{ex}
 Consider $SU(q+1).$ 
By Corollary 2.4,  $P_{q+1}(X_{1})\neq 0$ and $P_{2}(X_{1})=\ldots=P_{q}(X_{1})=0.$
Therefore $P_{q+1}\notin \left\langle \left\langle P_{2},P_{3},\ldots,P_{q}\right\rangle\right\rangle.$
If q is an odd prime then there is $w\in W_{SU(q+1)}$ such that $w(X_{1})\in {\frak s'}(\C),$ where 
${\frak s'}$ is the maximal abelian subalgebra of ${\frak su}(q-1)\times {\frak su}(2).$
\end{ex}

The following lemma is an immediate consequence of the above example.

\begin{lemma}
Let $H$ be a connected closed Lie subgroup of $SU(n)$ and $q$ be an odd prime.
If ${\frak su}(q-1)\times {\frak su}(2)\subset {\frak h},$
then  
$p_{q+1}\notin \left\langle \left\langle p_{2},p_{3},\ldots,p_{q}\right\rangle\right\rangle.$
\end{lemma}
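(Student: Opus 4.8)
The strategy is to transfer the non-membership statement from the maximal torus of $SU(n)$ down to the subalgebra $\frak{s}$, using the group element $w$ produced in Example 3.1. First I would recall the setup: by Example 3.1 (invoking Corollary 2.4), for $G = SU(q+1)$ we have $P_{q+1}(X_1) \neq 0$ while $P_2(X_1) = \cdots = P_q(X_1) = 0$, where $X_1$ is the distinguished eigenvector of the Coxeter transformation, and there is $w \in W_{SU(q+1)} = S_{q+1}$ with $w(X_1) \in \frak{s'}(\C)$, where $\frak{s'}$ is the maximal abelian subalgebra of $\frak{su}(q-1) \times \frak{su}(2)$. Since $q+1 \le n$ (as $\frak{su}(q-1) \times \frak{su}(2) \subset \frak{h} \subset \frak{su}(n)$ forces $q+1 \le n$), I can regard $\frak{su}(q+1)$, hence $\frak{s'}$, as sitting inside $\frak{su}(n)$, and I can extend $w$ to an element of $S_n$ fixing the remaining coordinates; the key point is that under the restriction map $P_j \mapsto p_j$ to $\frak{s}$, and then evaluating at the point $w(X_1) \in \frak{s'}(\C) \subseteq \frak{s}(\C)$, we get $p_j(w(X_1)) = P_j(w(X_1))$ for the ambient elementary symmetric polynomials, and these agree with $P_j(X_1)$ because $P_j$ is $S_{q+1}$-invariant (the extra coordinates being zero on $\frak{s'}(\C)$, so they do not interfere with the $SU(q+1)$ elementary symmetric polynomials).

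Granting this, the proof is a short contradiction argument. Suppose $p_{q+1} \in \langle\langle p_2, p_3, \ldots, p_q \rangle\rangle$, so $p_{q+1} = F(p_2, \ldots, p_q)$ for some polynomial $F$. Evaluating both sides at the point $v := w(X_1) \in \frak{s}(\C)$ gives $p_{q+1}(v) = F(p_2(v), \ldots, p_q(v))$. By the identification in the previous paragraph, $p_j(v) = P_j(X_1)$ for $2 \le j \le q+1$ (up to the harmless effect of the vanishing extra coordinates), so $p_2(v) = \cdots = p_q(v) = 0$ while $p_{q+1}(v) \neq 0$. Hence $0 \neq p_{q+1}(v) = F(0, \ldots, 0)$, which forces $F$ to have a nonzero constant term. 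But $p_{q+1}$ is a homogeneous polynomial of positive degree $q+1$ and each $p_j$ is homogeneous of positive degree, so $F(p_2, \ldots, p_q)$ has zero constant term — a contradiction. Therefore $p_{q+1} \notin \langle\langle p_2, \ldots, p_q \rangle\rangle$.

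The main obstacle — and the point deserving the most care — is the bookkeeping in the first paragraph: making precise how the Coxeter eigenvector $X_1$ of $SU(q+1)$, the Weyl element $w$, and the elementary symmetric polynomials interact once everything is embedded in $SU(n)$ with $n$ possibly strictly larger than $q+1$. Specifically one must check that restricting the degree-$j$ elementary symmetric polynomial $P_j$ in $n$ variables to $\frak{s}$ and then to the sub-sub-torus $\frak{s'}(\C)$ (where the coordinates outside the $SU(q+1)$-block vanish) literally reproduces the degree-$j$ elementary symmetric polynomial of $SU(q+1)$ — this is where $\frak{su}(q-1)\times\frak{su}(2) \subset \frak{h}$ is used, guaranteeing $\frak{s'} \subseteq \frak{s}$ so that evaluation at $v$ makes sense for $p_j$. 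Once this identification is in hand, the degree/constant-term argument is completely routine.
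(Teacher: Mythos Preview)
Your proposal is correct and follows exactly the approach the paper has in mind: the paper simply declares the lemma ``an immediate consequence of the above example,'' and you have faithfully unpacked that remark --- pull the point $w(X_{1})\in\frak{s'}(\C)\subset\frak{s}(\C)$ from Example~3.1, observe that $p_{2},\ldots,p_{q}$ vanish there while $p_{q+1}$ does not, and conclude by the homogeneity/constant-term contradiction. Your careful bookkeeping about embedding $SU(q+1)$ into $SU(n)$ and identifying the restricted elementary symmetric polynomials is exactly the detail the paper is suppressing.
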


\begin{theorem}
Let $q$ be an odd prime, $G=SU(nq),$ $H=\underbrace{SU(q)\times \ldots\times SU(q)}_{n},$ then
$G/H$ is not a formal space for $n\geq 2.$
\end{theorem}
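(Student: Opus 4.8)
The plan is to apply Theorem~2.1 together with Lemma~3.3, reducing everything to showing that the deficiency of $G/H$ is positive, equivalently that the restriction ideal fails to be generated by the ``expected'' number of elements. By Theorem~2.1(2) we may replace $H$ by its maximal torus $S$, so we work with $G/S$ where $\frak{s}$ is the maximal abelian subalgebra of $\frak h = \underbrace{\frak{su}(q)\oplus\cdots\oplus\frak{su}(q)}_{n}$. Here $\rk G = nq-1$ and $\rk S = m = n(q-1)$, and $H^{*}(\mathbf{B}G) = S_{W_{SU(nq)}}$ is freely generated by $P_{2},\ldots,P_{nq}$, so $H^{*}(\mathbf{B}i)$ sends these to $p_{2},\ldots,p_{nq}$, the restrictions to $\frak{s}$. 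The ideal $I \subset H^{*}(\mathbf{B}S)$ is generated by $p_{2},\ldots,p_{nq}$, and we must show that $r$, the minimal number of homogeneous generators of $I$, exceeds $m$, i.e.\ $r \geq m+1$.

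First I would observe that $\frak{su}(q-1)\oplus\frak{su}(2) \subset \frak{su}(q) \subset \frak h$ (since $q-1$ is even and $(q-1)+2 \le q+1 \le $ enough room inside a single $\frak{su}(q)$ factor for $q\ge 3$; for $q=3$ one has $\frak{su}(2)\oplus\frak{su}(2)\subset\frak{su}(4)$, and more generally $\frak{su}(q-1)\oplus\frak{su}(2)$ embeds block-diagonally in $\frak{su}(q+1)$, hence after a further trivial enlargement inside $\frak{su}(q)\oplus\frak{su}(q)\subset\frak h$), so the hypothesis of Lemma~3.3 is met and therefore
\[
p_{q+1}\notin \left\langle\left\langle p_{2},p_{3},\ldots,p_{q}\right\rangle\right\rangle .
\]
The key point is then that $p_{2},\ldots,p_{q}$ already account for $q-1$ of the needed generators at low degrees, while $p_{q+1}$ is \emph{not} a polynomial in them; one shows inductively that among $p_{2},\ldots,p_{nq}$ there must be at least $m+1$ that are ``new'' relative to the subalgebra generated by the lower-degree ones. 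Concretely, I would argue that a minimal generating set of $I$ as an ideal has size at least the number of degrees $k$ for which $p_{k}$ is not in the subalgebra generated by $\{p_{j}: j<k\}$ modulo lower-degree generators; Lemma~3.3 guarantees that degree $q+1$ is such a ``jump'' that would not be forced if $I$ needed only $m$ generators, pushing $r$ to at least $m+1$ and hence $df(G/H) = r-m \ge 1$.

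The main obstacle, and the step requiring the most care, is the bookkeeping that converts the single non-membership statement $p_{q+1}\notin\langle\langle p_{2},\ldots,p_{q}\rangle\rangle$ into the numerical bound $r > m$: one must be sure that the ``generic'' count of generators of $I$ is exactly $m$ (this is the content of $r\ge m$ always, with equality in the formal case), and that the extra generator forced at degree $q+1$ is genuinely extra and not compensated by a \emph{deficit} at some higher degree. I would handle this by a dimension/Hilbert-series comparison in low degrees: restricting to the part of $S_{W}$ of degree $\le q+1$, the algebra $\langle\langle p_{2},\ldots,p_{q}\rangle\rangle$ has a strictly smaller graded piece in degree $q+1$ than the image of $H^{*}(\mathbf{B}i)$ does (because $p_{q+1}$ is independent of it), while a minimal generating set of $I$ supported in degrees $\le q+1$ can have at most $m$ elements precisely when $df=0$; the independence of $p_{q+1}$ thus contradicts $df=0$. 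Finally, the case $n\ge 2$ is exactly what makes $m = n(q-1) < nq-1 = \rk G$, so there is room for the deficiency to be positive; for $n=1$ the space $SU(q)/SU(q)$ is a point and trivially formal, which is why the hypothesis $n\ge 2$ appears.
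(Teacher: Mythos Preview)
Your proposal has a genuine gap at exactly the point you yourself flag as ``the step requiring the most care''. You correctly invoke Lemma~3.2 (your ``3.3'') to obtain $p_{q+1}\notin\langle\langle p_{2},\ldots,p_{q}\rangle\rangle$, and you correctly note that $r\ge m$ holds automatically. But these two facts do not combine to give $r\ge m+1$. The general bound $r\ge m$ comes from a Krull-dimension argument, not from counting how many of the $p_{k}$ are ``new''; knowing that $p_{q+1}$ is new only tells you that the number of new $p_{k}$ in degrees $\le q+1$ is at most $q$, and for $n\ge 2$ one has $q<m+1$. Your proposed ``Hilbert-series comparison in low degrees'' cannot close this gap: even if $df=0$ and $I$ admits $m$ generators, there is no reason those $m$ generators should sit in degrees $\le q+1$, so the independence of $p_{q+1}$ from $p_{2},\ldots,p_{q}$ produces no contradiction by itself. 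In short, you have exhibited \emph{one} extra generator but you still owe $m$ \emph{others} that are simultaneously independent of it.

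The paper supplies precisely this missing ingredient, and it does so by a structural argument you have not used at all. One identifies $\frak{s}(\mathbb{C})$ with the span of the Coxeter eigenvectors $X_{k}\in\mathcal{B}_{A_{qn-1}}$ for $k\notin q\mathbb{N}$ (via an explicit root-of-unity computation showing a Weyl translate of each such $X_{k}$ lies in $\frak{s}(\mathbb{C})$). On this eigenbasis Lemma~2.3 applies and shows that the $(q-1)n$ restricted polynomials $p_{k}$ with $2\le k\le qn$ and $k\notin q\mathbb{N}+1$ are algebraically independent on $\frak{s}$. That gives $m=(q-1)n$ independent generators; \emph{then} Lemma~3.2 adds $p_{q+1}$ (which has $k=q+1\in q\mathbb{N}+1$, so it is not among the previous list) as a further independent element, yielding $r\ge m+1$ and hence $df(G/H)\ge 1$. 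Without this Coxeter-eigenvector description of $\frak{s}$ and the resulting $(q-1)n$ independent $p_{k}$'s, the single non-membership from Lemma~3.2 is not enough.
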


\begin{proof}
 Let $B=\left\{X_{k}\in {\mathcal B}_{A_{qn-1}}|k\notin q\N\right\}.$ 
Consider  $\left\langle B\right\rangle,$ the linear span of $B.$ Then 
$\dim \left\langle B\right\rangle=\rk (H)=(q-1)n.$ 
Moreover, observe that for each $1\leq k < qn $ such that $k\notin q\N$ and for each $t=0,1,\ldots,n-1$
we have  $\left(\left(e^{2\pi i/qn}\right)^{k}\right)^{t}+\left(\left(e^{2\pi i/qn}\right)^{k}\right)^{t+n}+
\ldots+\left(\left(e^{2\pi i/qn}\right)^{k}\right)^{t+(q-1)n}=0.$ 
Therefore, there is  $w\in W_{SU(qn)}$ such that $w(X_{k})\in {\frak s}(\C)$ for each $X_{k}\in B.$ Thus, ${\frak s}(\C)\cong \left\langle B\right\rangle.$
 Since $\dim(S)=(q-1)n,$ a minimal system of homogeneous generators of the ideal $(p_{2}, p_{3},\ldots , p_{(q-1)n})$ contains 
at least $(q-1)n$ elements. This means that $df(G/S)\geq 0.$
Lemma 2.3 implies that polynomials $p_{k}$ for $2\leq k\leq qn$ and $k\notin q\N+1$ are without relations in $\frak{s}.$  
The  number of such polynomials is $(q-1)n.$  Moreover, by Lemma 3.2, 
$p_{q+1}\notin \left\langle \left\langle p_{2},p_{3},\ldots,p_{q}\right\rangle\right\rangle.$ 
This and Lemma 2.3 imply that  $p_{q+1}$ and all the 
polynomials $p_{k}$ with $k$ such that $2\leq k\leq qn$ and $k\notin q\N+1$ are without relations in $\frak{s}.$
Hence,  $df(G/S)\geq 1.$ By Theorem 2.1,  $df(G/H)\geq 1$ and $G/H$ is non-formal.
\end{proof}

\begin{lemma}
The space $\frac{SU(12)}{SU(4)\times SU(4)\times SU(4)}$ is non-formal.
\end{lemma}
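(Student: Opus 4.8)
The plan is to run the argument of Theorem 3.3, while noting that $SU(12)/(SU(4)\times SU(4)\times SU(4))$ is \emph{not} one of its instances: in the notation $G=SU(nq)$, $H=SU(q)^{\,n}$ one needs $12=nq$ with $q$ an odd prime, whereas here $n=3$, $q=4$, and $4$ is not prime. By Theorem 2.1 it suffices to prove $df(G/S)>0$, where $G=SU(12)$ and $S$ is the maximal torus of $H$, so $\rk S=\rk H=9$, $\rk G=11$, and $H^{*}(\mathbf{B}G)$ is freely generated by $P_{2},\dots,P_{12}$. Exactly as in the proof of Theorem 3.3 one may replace $H$ by a conjugate whose maximal torus has $\frak{s}(\C)$ equal to the span of the eigenvectors $X_{k}\in\mathcal{B}_{A_{11}}$ with $4\nmid k$, namely $X_{1},X_{2},X_{3},X_{5},X_{6},X_{7},X_{9},X_{10},X_{11}$ (a $9$-dimensional space): for $4\nmid k$ the entries of $X_{k}$ indexed by any length-$4$ arithmetic progression of step $3$, such as $\{1,4,7,10\}$, sum to $0$, since $1+\varepsilon^{3k}+\varepsilon^{6k}+\varepsilon^{9k}=0$ whenever $\varepsilon^{3k}\neq1$. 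Writing $c_{k}$ for the coordinate dual to $X_{k}$ and $p_{k}=P_{k}|_{\frak s}$, Lemma 2.3 shows that the nine $p_{k}$ with $k\not\equiv1\pmod 4$ retain the monomial $X_{1}^{\,k-1}X_{13-k}$, which survives the restriction; these are without relations in $\frak{s}$, so $df(G/S)\ge0$.

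The crucial extra input — the place where Theorem 3.3 invokes Lemma 3.2, which is unavailable because $q=4$ is composite — is that the above $\frak{s}(\C)$ is stable under the Coxeter transformation $c$ (it permutes the three blocks cyclically and multiplies each $X_{k}$ by $\varepsilon^{k}$). Grade $\C[\frak{s}(\C)]=\C[c_{1},c_{2},c_{3},c_{5},c_{6},c_{7},c_{9},c_{10},c_{11}]$ by assigning $c_{k}$ the \emph{weight} $k$, read modulo $h=12$; since $c$ scales $c_{k}$ by $\varepsilon^{-k}$ and each $p_{k}$ is $c$-invariant, every $p_{k}$ is a sum of monomials of weight $\equiv0\pmod{12}$. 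Three elementary facts then do the rest. (a) $\C[\frak{s}(\C)]$ has no nonzero element of degree $1$ and weight $\equiv0\pmod{12}$, and a nonzero homogeneous element of degree $\ge2$ and weight $\equiv0\pmod{12}$ has integer weight $\ge12$; in particular every monomial occurring in any $p_{k}$ has integer weight $\ge12$. (b) Each $p_{k}$, $2\le k\le12$, contains a monomial of integer weight \emph{exactly} $12$: for $k\not\equiv1\pmod4$ it is $X_{1}^{\,k-1}X_{13-k}$ (equal to $X_{1}^{12}$ when $k=12$, cf. Corollary 2.4), and for the two remaining degrees $k=5,9$ Newton's identities give $k\,p_{k}=(-1)^{k-1}\,s_{k}|_{\frak s}+R_{k}$, where $s_{k}$ is the $k$th power sum, $R_{k}$ is a sum of products of two restricted polynomials each of weight $\ge12$ (hence $R_{k}$ has weight $\ge24$), and $s_{k}|_{\frak s}$ contains the weight-$12$ monomial $X_{1}^{\,k-2}X_{2}X_{12-k}$. (c) If $p_{k}\in(p_{2},\dots,p_{k-1})$, then comparing the components of degree $k$ and weight $\equiv0\pmod{12}$ gives $p_{k}=\sum_{2\le j<k}N_{j}p_{j}$ with $N_{j}$ homogeneous of degree $k-j$ and weight $\equiv0\pmod{12}$; by (a) the term $j=k-1$ vanishes, and for $j\le k-2$ every monomial of $N_{j}$ and of $p_{j}$ has weight $\ge12$ by (a), so every monomial of $N_{j}p_{j}$ has weight $\ge12+12=24$, contradicting (b).

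Therefore no $p_{k}$ is redundant, a minimal system of homogeneous generators of the ideal $(p_{2},\dots,p_{12})\subset H^{*}(\mathbf{B}S)$ consists of all eleven of them, and $df(G/S)=11-9=2>0$; by Theorem 2.1, $df(G/H)=2$, so $SU(12)/(SU(4)\times SU(4)\times SU(4))$ is not formal. The one point that calls for real care is fact (b) in degrees $k=5$ and $k=9$ — precisely the degrees at which the monomial provided by Lemma 2.3 sits on a coordinate ($X_{4}$ or $X_{8}$) killed by the restriction, leaving the prime-based Lemma 3.2 with nothing to contribute — but there the Newton bookkeeping is direct, every correction term being a product of two restricted polynomials each of weight $\ge12$.
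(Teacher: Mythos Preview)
Your argument is correct, and it establishes more than the paper does. Both proofs begin identically: one conjugates $H$ so that $\mathfrak{s}(\mathbb C)$ becomes the span of the eigenvectors $X_k$ with $4\nmid k$, and Lemma 2.3 then controls the nine restrictions $p_k$ with $k\not\equiv 1\pmod 4$. At this point the paper simply asserts, via a ``straightforward calculation'', that $p_5\notin\langle\langle p_2,p_3\rangle\rangle$, which together with Lemma 2.3 yields ten irredundant generators and hence $df(G/S)\ge 1$. You instead exploit the extra structure that $c$ preserves $\mathfrak{s}(\mathbb C)$ and introduce the weight grading modulo $h=12$; the three facts (a)--(c) then show that \emph{none} of the eleven $p_k$ lies in the ideal generated by the earlier ones, giving the sharp value $df(G/S)=11-9=2$. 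Your handling of the delicate degrees $k=5,9$ via Newton's identities is sound: the remainder $R_k=\sum_{i=2}^{k-2}(-1)^{i-1}p_{k-i}\,(s_i|_{\mathfrak s})$ is a sum of products of two $c$-invariant polynomials of degree $\ge 2$, hence of weight $\ge 24$, while the explicit form $s_k=12^{\,1-k/2}\sum_{\sum m_i\equiv 0\,(12)}\prod X_{m_i}$ shows that $s_k|_{\mathfrak s}$ genuinely contains the weight-$12$ monomial you name with nonzero coefficient. The trade-off is clear: the paper's route is shorter and needs only one numerical check, while yours is more conceptual, avoids any ad hoc computation, and determines the deficiency exactly.
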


\begin{proof}

 Let 
	$B=\left\{X_{k}\in {\mathcal B}_{A_{11}}|k\notin 4\N\right\}.$
 Observe that for each $1\leq k\leq 12 $ such that $k\notin 4\N$ and for each $t=0,1,2$
we have  $\left(\left(e^{2\pi i/12}\right)^{k}\right)^{t}+\left(\left(e^{2\pi i/12}\right)^{k}\right)^{t+3}+
\left(\left(e^{2\pi i/12}\right)^{k}\right)^{t+6}+\left(\left(e^{2\pi i/12}\right)^{k}\right)^{t+9}=0.$ 
Therefore, there is  $w\in W_{SU(12)}$ such that $w(X_{k})\in {\frak s}(\C)$ for each $X_{k}\in B.$ Since 
$\dim \left\langle B\right\rangle=\rk (H)=9, $  ${\frak s}(\C)\cong \left\langle B\right\rangle.$ 
Lemma 2.3 implies that polynomials $p_{2},$ $\ldots,$ $p_{12}$ except for $p_{5}$ and $p_{9}$ are without relations in $\frak{s}.$ A straightforward calculation shows that $p_{5}\notin \left\langle \left\langle p_{2},p_{3}\right\rangle\right\rangle.$ 
This and Lemma 2.3 imply that  polynomials $p_{2},$ $\ldots,$ $p_{12}$ except for  $p_{9}$ are without relations in $\frak{s}.$
The  number of such polynomials is $10.$
Hence,  $df(G/S)\geq 1.$ By Theorem 2.1,  $df(G/H)\geq 1$ and $G/H$ is non-formal.
\end{proof}

The next result extends Lemma 3.4.

\begin{theorem}
Let $q$ be an odd prime, $G=SU(4n),$ $H=\underbrace{SU(4)\times \ldots\times SU(4)}_{n},$ then
$G/H$ is not a formal space for $n\geq 2.$
\end{theorem}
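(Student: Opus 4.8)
The plan is to run the argument of Theorem 3.3 almost verbatim, with the odd prime $q$ there replaced by the number $4$. Since $4$ is not prime, the input supplied there by Lemma 3.2 is not available, and it has to be replaced by the explicit computation already used in the proof of Lemma 3.4 (so the present statement is simply the extension of Lemma 3.4 from $n=3$ to all $n\ge 2$; the hypothesis that $q$ is an odd prime plays no role here). Write $G=SU(4n)$, let $H\cong SU(4)^{n}$ be the stated block-diagonal subgroup, and let $S$ be the maximal torus of $H$, so that $\dim S=\rk H=3n$ while $\rk G=4n-1$.

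First I would locate $\mathfrak{s}(\C)$ inside $\mathfrak{t}(\C)$. Put $B=\{X_{k}\in{\mathcal B}_{A_{4n-1}}\mid k\notin 4\N\}$; there are $(4n-1)-(n-1)=3n$ such vectors, so $\dim\langle B\rangle=3n=\rk H$. For $k\notin 4\N$ the number $(e^{2\pi i/4n})^{kn}=e^{2\pi i k/4}$ is a primitive square or fourth root of unity, hence $\sum_{j=0}^{3}\big((e^{2\pi i/4n})^{k}\big)^{t+jn}=0$ for every $t$. Partitioning the $4n$ coordinates of each $X_{k}$ into the $n$ quadruples $\{t,t+n,t+2n,t+3n\}$, $0\le t\le n-1$, and using that each of these partial sums vanishes, one fixed $w\in W_{SU(4n)}$ carries every $X_{k}\in B$ into $\mathfrak{s}(\C)$; since $w$ is a linear automorphism of $\mathfrak{t}(\C)$ and $\dim\langle B\rangle=\dim\mathfrak{s}(\C)$, we get $w(\langle B\rangle)=\mathfrak{s}(\C)$, so we may identify $\mathfrak{s}(\C)$ with $\langle B\rangle$ when restricting $W_{G}$-invariant polynomials. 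This is the $SU(4n)$ analogue of the computation in the proof of Lemma 3.4.

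Next I would extract the algebraically independent restrictions and the new ingredient. By Lemma 2.3, for each $k$ with $2\le k\le 4n$ and $k\notin 4\N+1$ the coefficient of $X_{1}^{k-1}X_{4n-k+1}$ in $P_{k}$ is nonzero, and both indices $1$ and $4n-k+1$ lie outside $4\N$ (so both basis vectors lie in $B$); hence the $3n$ restrictions $p_{k}$ for these $k$ are without relations in $\mathfrak{s}$, which already gives $df(G/S)\ge 0$. To upgrade this I would adjoin $p_{5}$ (note $5\in 4\N+1$, so $p_{5}$ is not among the $3n$ above) and verify $p_{5}\notin\langle\langle p_{2},p_{3}\rangle\rangle$. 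Identifying $\mathfrak{s}$ with $\mathfrak{t}_{\mathfrak{su}(4)}^{\oplus n}$ and writing $\sigma_{i,2},\sigma_{i,3},\sigma_{i,4}$ for the elementary symmetric functions of the $i$-th block (so $\sigma_{i,1}=0$), one computes $p_{2}=\sum_{i}\sigma_{i,2}$, $p_{3}=\sum_{i}\sigma_{i,3}$ and $p_{5}=\sum_{i\ne j}\sigma_{i,2}\sigma_{j,3}$, whence $p_{2}p_{3}-p_{5}=\sum_{i}\sigma_{i,2}\sigma_{i,3}\ne 0$ while $p_{2}p_{3}\ne 0$. Since the degree-$5$ component of $\langle\langle p_{2},p_{3}\rangle\rangle$ equals $\R\,p_{2}p_{3}$, and (using $n\ge 2$) the monomial support of $p_{2}p_{3}$ contains cross terms $\sigma_{i,2}\sigma_{j,3}$ with $i\ne j$ that are absent from $\sum_{i}\sigma_{i,2}\sigma_{i,3}$, no scalar multiple of $p_{2}p_{3}$ can equal $p_{5}$; hence $p_{5}\notin\langle\langle p_{2},p_{3}\rangle\rangle$. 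This is the uniform-in-$n$ version of the explicit calculation in Lemma 3.4 and substitutes for Lemma 3.2.

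Finally, exactly as in the proof of Theorem 3.3, $p_{5}$ together with the $3n$ polynomials $p_{k}$ ($2\le k\le 4n$, $k\notin 4\N+1$) are without relations in $\mathfrak{s}$ — $p_{5}$ cannot be absorbed, because on degree grounds its membership in the subalgebra generated by the others would already force $p_{5}\in\langle\langle p_{2},p_{3}\rangle\rangle$. Therefore a minimal system of homogeneous generators of the ideal $(p_{2},\ldots,p_{4n})\subset H^{*}({\bf B}S)$ has at least $3n+1$ elements, so $df(G/S)\ge (3n+1)-3n=1$; by Theorem 2.1, $df(G/H)=df(G/S)\ge 1$, and $G/H$ is not formal. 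The main obstacle is the middle step: the correct translation into block coordinates and the verification $p_{5}\notin\langle\langle p_{2},p_{3}\rangle\rangle$ uniformly in $n$, together with the bookkeeping (inherited from the proof of Theorem 3.3 and resting on Lemma 2.3) that guarantees adjoining $p_{5}$ genuinely raises the minimal number of generators of the ideal.
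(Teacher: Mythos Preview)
Your proposal is correct and follows exactly the approach the paper intends: the paper's own proof of this theorem consists solely of the sentence ``The proof is similar to the proof of Theorem 3.3,'' and what you have written is precisely that proof spelled out, with the input from Lemma~3.2 replaced by the explicit $p_{5}\notin\langle\langle p_{2},p_{3}\rangle\rangle$ computation borrowed (and extended to all $n\ge 2$) from Lemma~3.4. You also rightly observe that the clause ``Let $q$ be an odd prime'' in the statement is vestigial and plays no role.
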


\begin{proof}
The proof is similar to the proof of Theorem 3.3.
\end{proof}

\end{document}